\newcommand{\CM}{Cohen-Macaulay}
\newcommand{\n}{\mathfrak{n} }
\newcommand{\m}{\mathfrak{m} }
\newcommand{\Ef}{\mathfrak{E} }
\newcommand{\C}{\mathcal{C} }
\newcommand{\D}{\mathcal{D} }
\newcommand{\rt}{\rightarrow}
\newcommand{\ov}{\overline}
\newcommand{\wh}{\widehat }
\newcommand{\image}{\operatorname{image}}
\newcommand{\height}{\operatorname{height}}
\newcommand{\mds}{\operatorname{\underline{mod}}}
\newcommand{\CMS}{\operatorname{\underline{CM}}}
\theoremstyle{plain}
\newtheorem{theorem}{Theorem}[section]
\newtheorem{corollary}[theorem]{Corollary}
\newtheorem{lemma}[theorem]{Lemma}
\theoremstyle{definition}
\newtheorem{remark}[theorem]{Remark}
\theoremstyle{remark}
\begin{document}

\title[Stable equivalences]{Equivalences of stable categories of Gorenstein local rings}
\author{Tony~J.~Puthenpurakal}
\date{\today}
\address{Department of Mathematics, IIT Bombay, Powai, Mumbai 400 076}

\email{tputhen@math.iitb.ac.in}
\subjclass{Primary  13D09,  13D15 ; Secondary 13C14, 13C60}
\keywords{stable category of Gorenstein rings, Hensel rings, Grothendieck groups}

 \begin{abstract}
In this paper we give bountiful examples of Gorenstein local rings $A$ and $B$ such that there is a triangle equivalence between the stable categories \underline{CM}($A$), \underline{CM}($B$).
\end{abstract}
 \maketitle
\section{introduction}
Let $R$ be a commutative Artin ring and let $\Lambda$ be a not-necessarily commutative Artin $R$-algebra. Let $\mds(\Lambda)$ denote the stable category of $\Gamma$.
The study of equivalences of stable categories of Artin $R$-algebras has a rich history; see \cite[Chapter X]{ARS}. Auslander discovered that many concepts in representation theory of Artin algebras have
natural analogues in the study of maximal \CM \ ( = MCM)  modules of a commutative \CM \  local ring $A$. See \cite{Y} for a nice exposition of these ideas.
By following Auslander's idea  we investigate equivalences of the stable
category of MCM modules over commutative \CM \ local rings. If $A$ is Gorenstein local then, $\CMS(A)$,  the stable category of MCM $A$-modules has a triangulated structure, see \cite[4.4.1]{Buchw}. So as a first
iteration in this program,  we may investigate triangle equivalences of stable categories of MCM modules over commutative Gorenstein  local rings.

There is paucity of examples of Gorenstein local rings $A, B$ such that $\CMS(A)$ is triangle equivalent to $\CMS(B)$. There are two well-known examples of triangle equivalences. First is
Kn\"{o}rrer periodicity \cite{Kno} for hypersurfaces. Another gives an equivalence $\CMS(A) \rt \CMS(\widehat{A})$ where $A$ is a excellent, henselian, Gorenstien isolated singularity, see \cite[A.6]{KMB}. In this paper we give bountiful examples of Gorenstein local rings $A$ and $B$ such that there is a triangle equivalence between the stable categories $\CMS(A), \CMS(B)$.

\emph{The Example:} \\
Let $(A,\m)$  be a Gorenstein local ring of dimension $d \geq 1$. Assume $A$ is essentially of finite type over a field $K$ and that $A$ is an isolated singularity. Also assume that the completion of $A$ is a domain.  Further assume that the Grothendieck group of the completion of $A$ is a finitely generated abelian group. \\
Let $\Ef_P(A)$ be the set of isomorphism classes of pointed etale neighborhoods of $A$, see \ref{e}, \ref{ep}.
It is well-known that  $\Ef_P(A)$ is a directed set and  $A^h$, the henselization of $A$,
is
\[
A^h = \lim_{R \in  \Ef_P(A)}R.
\]
By Remark \ref{inf-chain}, for any $B \in \Ef_P(A)$  there exists atleast one infinite chain in   $\Ef_P(A)$ starting at $B$.

1. Let $A_1 \rt A_2 \rt \cdots \rt A_n \rt \cdots$ be a chain in  $\Ef_P(A)$. Then we show that there exists $n_0$ such that
$$ \CMS(A_n) \cong \CMS(A_{n+1})  \quad \text{for all $n \geq n_0$}. $$

2. There exists a chain $B_1 \rt B_2 \rt \cdots \rt B_n \rt \cdots$  in  $\Ef_P(A)$
such that
$$ \CMS(B_n) \cong \CMS(A^h)  \quad \text{for all $n \geq 1$}. $$

\begin{remark}
In (2) note that $B_n$ is NOT henselian, see  \ref {h-N-ft}. Nevertheless  \\ $\CMS(B_n) \cong \CMS(A^h)$ is Krull-Schmidt  for all $n \geq 1$.
\end{remark}
\emph{Technique used to prove the result:}\\
Under the hypotheses of the example it can be shown that $G(\CMS(A^h))$ is a finitely generated abelian group, see \ref{hensel}.
Let $R, S \in \Ef_P(A)$ and let $R \xrightarrow{\psi} S$ be a homomorphism  in  $ \Ef_P(A)$. We show there is a commutative diagram
\[
\xymatrix{
\
&G(\CMS(S))
\ar@{->}[dr]^{f_S}
 \\
G(\CMS(R))
\ar@{->}[rr]_{f_R}
\ar@{->}[ur]^{G(\psi)}
&\
&G(\CMS(A^h))
}
\]
Here $G(\psi), f_R, f_S$ are injective  by \ref{etale-extn} and \ref{etale-extn-2}. We show that if  $G(\psi)$ is an isomorphism then $\psi \colon \CMS(R) \rt \CMS(S)$ is an equivalence, see \ref{wc-equi}.

(1) We note that if  $A_1 \rt A_2 \rt \cdots \rt A_n \rt \cdots$ be a chain in  $\Ef_P(A)$
then we have an ascending chain of subgroups of $G(\CMS(A^h))$
$$\image f_{A_1} \subseteq \image f_{A_2}  \subseteq \cdots \subseteq \image f_{A_n}  \subseteq \cdots$$
As $G(\CMS(A^h))$ is a finitely generated abelian group it follows that there exists $m$ such that $\image f_{A_n} = \image f_{A_m}$ for all $n \geq m$. It follows that
$$ \CMS(A_n) \cong \CMS(A_{n+1})  \quad \text{for all $n \geq m$}. $$

(2) As $G(\CMS(A^h))$ is a finitely generated abelian group we first show that there exists $B \in \Ef_P(A)$ such that the natural map $f_B \colon \CMS(B) \rt  \CMS(A^h)$ is surjective.
As it is injective, see \ref{etale-extn-2} it follows that $f_B$ is an isomorphism. So again by \ref{etale-extn-2} we get $\CMS(B)  \cong \CMS(A^h)$.

We choose an infinite chain
 $B_1 \rt B_2 \rt \cdots \rt B_n \rt \cdots$  in  $\Ef_P(A)$ starting at $B = B_1$. It follows from the commutative diagram above that $f_{B_n} \colon G(\CMS(B_n)) \rt G(\CMS(A^h))$ is an isomorphism for all $n \geq 1$. So  by \ref{etale-extn-2} we get $\CMS(B_n)  \cong \CMS(A^h)$ for all $n \geq 1$.

 \begin{remark}
 Let $(A,\m)$ be a Gorenstein local ring, essentially of finite type over a field $K$ and   that $A$ is an isolated singularity. Then by \cite[1.5]{KK} the natural map $G(A) \rt G(A^h)$ is injective. Assume $\wh{A}$ is also a domain (automatic if $\dim A \geq 2$).
 The entire argument as above goes through if we just assume the quotient group $G(\wh{A})/G(A)$ is a finitely generated abelian group. Regrettably we do  not have a nice class of rings with this property (i.e., with $G(\wh{A})$ infinitely generated abelian group and $G(\wh{A})/G(A)$ finitely generated abelian group).
 \end{remark}

Here is an overview of the contents of this paper. In section two we prove some general results on Grothendieck groups of triangulated categories. In section three we discuss some preliminary facts on pointed \'{e}tale neighbourhoods of a local ring. In section four we discuss our example. In the next section we give many examples of local rings with $G(\wh{A})$ finitely generated.
In the appendix we discuss a fact which is crucial for us.
 \section{Some generalities on Grothendieck groups of triangulated categories}
Throughout all triangulated categories considered will be skeletally small.
Let $\C, \D$ be triangulated categories.
\s
An triangulated  functor $F\colon \C\rt \D$ is called an \emph{equivalence up to direct summand's}
if it is fully faithful and any object $X \in \D$  is isomorphic to a direct summand of $F(Y)$ for
some $Y \in \C$.

\s \label{wc} We say $\C$ has \textit{weak cancellation}, if for $U, V, W \in \C$ we have
$$ U \oplus V \cong  U \oplus W \implies V \cong W. $$

\s Let $\C$ be a triangulated category. The Grothendieck group $G(\C)$ is the quotient group of the free abelian group
on the set of isomorphism classes of objects of $\C$  by the Euler relations: $[V] =
[U] + [W]$ whenever there is an exact triangle in $\C$
$$U \rt V \rt W \rt U[1]. $$
As $[U[1]] = -[U]$ in $G(\C)$, it follows that any element of $G(\C)$ is of the form $[V]$ for some $V \in \C$.

We first show:
\begin{theorem}
\label{main} Let $\phi \colon \C \rt \D$ be a triangulated functor which is an  equivalence upto direct summands. Then the natural map $G(\phi) \colon G(\C) \rt G(\D)$ is injective.
\end{theorem}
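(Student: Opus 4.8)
\noindent\emph{Proof proposal.}
The plan is to reduce the theorem to Thomason's classification of dense triangulated subcategories. Since $\phi$ is triangulated and fully faithful, it factors as $\C \xrightarrow{\ \ov{\phi}\ } \phi(\C) \hookrightarrow \D$, where $\phi(\C) \subseteq \D$ is the full subcategory on all objects isomorphic to $\phi(X)$ for some $X \in \C$. First I would check that $\phi(\C)$ is a triangulated subcategory of $\D$: it is closed under shifts since $\phi(X)[\pm 1] \cong \phi(X[\pm 1])$, and closed under cones because, $\phi$ being full, every morphism between objects of $\phi(\C)$ is isomorphic to $\phi(f)$ for some $f$ in $\C$, whose cone is isomorphic to $\phi$ of a cone of $f$ ($\phi$ being triangulated). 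Then $\ov{\phi}\colon \C \rt \phi(\C)$ is a triangle equivalence, so $G(\ov{\phi})\colon G(\C) \rt G(\phi(\C))$ is an isomorphism. Finally, the hypothesis that $\phi$ is an equivalence up to direct summands says exactly that every object of $\D$ is a direct summand of an object of $\phi(\C)$; that is, $\phi(\C)$ is a \emph{dense} triangulated subcategory of $\D$. So it suffices to prove: for a dense triangulated subcategory $\mathcal{A}$ of a triangulated category $\mathcal{B}$, the natural map $G(\mathcal{A}) \rt G(\mathcal{B})$ is injective; the theorem then follows because $G(\phi)$ is the composite of the isomorphism $G(\ov{\phi})$ with the inclusion-induced map $G(\phi(\C)) \rt G(\D)$.

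For the dense-subcategory statement I would either cite Thomason, or argue directly as follows. Every element of $G(\mathcal{A})$ is $[M]$ for some $M \in \mathcal{A}$ (as already observed for $G(\C)$), so suppose $[M] \mapsto 0$ in $G(\mathcal{B})$. Unwinding $G(\mathcal{B})$ as the free abelian group on isomorphism classes modulo the Euler relations, separating the relation $[M]=0$ into its positive and negative parts, and collapsing each part using that finite direct sums and rotations of exact triangles are again exact triangles, one reaches a normal form: there exist exact triangles $U \rt V \rt W \rt U[1]$ and $U' \rt V' \rt W' \rt U'[1]$ in $\mathcal{B}$ together with an isomorphism $M \oplus U \oplus W \oplus V' \cong U' \oplus W' \oplus V$. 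Now density of $\mathcal{A}$ lets one push these two triangles into $\mathcal{A}$: adding a trivial triangle $0 \rt N \rt N \rt 0$ to an exact triangle moves any prescribed one of its three terms into $\mathcal{A}$, and doing this for two of the three terms forces the third into $\mathcal{A}$ as well, since in an exact triangle two objects lying in a triangulated subcategory force the third to lie in it. After this replacement the displayed isomorphism, enlarged by the same trivial summands on both sides, becomes an isomorphism between objects of $\mathcal{A}$, hence an isomorphism in $\mathcal{A}$; reading off the two triangle relations and this isomorphism inside $G(\mathcal{A})$ yields $[M] = 0$ there.

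The step I anticipate as the main obstacle is precisely this passage, in the direct argument, from ``$[M] = 0$ in $G(\mathcal{B})$'' to the single normal-form relation: the bookkeeping with rotations and direct sums needed to merge all the positive Euler relations into one exact triangle and all the negative ones into another, and then the verification that density genuinely absorbs every ambient $\mathcal{B}$-object occurring in the relation into $\mathcal{A}$ without spoiling it. This is exactly the content of Thomason's lemma on equality of classes in the Grothendieck group of a triangulated category, and the cleanest write-up may simply be to invoke it; with that citation, the proof is just the reduction carried out in the first paragraph. (An essentially equivalent route extends $\phi$ to a triangle equivalence of idempotent completions $\wh{\C} \rt \wh{\D}$ and chases the naturality square against the injectivity $G(\C) \hookrightarrow G(\wh{\C})$, which is the special case $\mathcal{B} = \wh{\mathcal{A}}$ of the statement above.)
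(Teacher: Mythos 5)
Your proposal is correct and follows essentially the same route as the paper: both rest on Thomason's normal form for classes in the Grothendieck group of a triangulated category (\cite[2.4]{Th}) together with the trick of adding trivial triangles to absorb the ambient objects into the dense image subcategory, and then using full faithfulness to transport the resulting relations back to $\C$. Your packaging via the dense strictly full subcategory $\phi(\C)$ is only a cosmetic reorganization of the paper's direct argument with $\phi$.
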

\begin{proof}
Suppose $G(\phi)([U]) = [\phi(U)] = 0$ in $G(\D)$. So by \cite[2.4]{Th} there exists $X, Y, Z \in \D$ and triangles
\begin{align*}
X &\rt Y\oplus \phi(U) \rt Z \rt X[1], \quad \text{and} \\
X &\rt Y \rt Z \rt X[1].
\end{align*}
There exists $X_1, Z_1 \in \D$ such that $X \oplus X_1 = \phi(M)$ and $Z \oplus Z_1 = \phi(N)$. We add the triangles $X_1 \xrightarrow{1} X_1 \rt 0 \rt X_1 \rt 0 \rt X_1[1]$ and $0 \rt Z_1 \xrightarrow{1} Z_1 \rt 0$ to the above triangles.

The modified second triangle is
\[
\phi(M) \rt Y^\prime \rt \phi(N) \rt \phi(M)[1].
\]
It follows that $Y^\prime  = \phi(L)$ for some $L \in \C$.
The modified first triangle becomes
$$ \phi(M) \rt\phi(L)\oplus \phi(U) \rt \phi(N) \rt \phi(M)[1].$$
Note $\phi(L) \oplus \phi(U) = \phi(L\oplus U)$. We now use the fact that if $W_1, W_2 \in \C$ then $W_1 \cong W_2$ if and only if $\phi(W_1) \cong \phi(W_2)$. As $\phi$ is fully faithful we get triangles in $\C$
\begin{align*}
M &\rt L \oplus U \rt N \rt M[1], \quad \text{and} \\
M &\rt L  \rt N \rt M[1].
\end{align*}
It follows that $[U] = 0$ in $G(\C)$.
\end{proof}

A natural question is when is $Y \in \D$ is isomorphic to $\phi(X)$ for some $X \in \C$. A necessary condition is of course that $[Y] \in \image G(\phi)$. Some what surprisingly the converse also holds if $\D$ satisfies weak cancellation.
\begin{theorem}
\label{main-onto}
(with hypotheses as in Theorem \ref{main}). Also assume $\D$ satisfies weak cancellation.
If $[V] \in \image G(\phi)$ then there exists $W \in \C$ with $\phi(W) \cong V$.
\end{theorem}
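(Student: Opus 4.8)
The plan is to reverse the bookkeeping in the proof of Theorem \ref{main}: use Thomason's lemma to trap $V$ inside a pair of parallel exact triangles, enlarge everything into the essential image of $\phi$, and then descend a single direct‑sum splitting from $\D$ down to $\C$, where weak cancellation finishes the job.

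Concretely, since $[V]\in\image G(\phi)$ and every class in $G(\C)$ is $[U]$ for some object $U$, I would first write $[V]=G(\phi)([U])=[\phi(U)]$; then $[V\oplus\phi(U)[1]]=[V]-[\phi(U)]=0$ in $G(\D)$. Feeding this class‑zero object into \cite[2.4]{Th} exactly as in the proof of Theorem \ref{main} produces $X,Y,Z\in\D$ and exact triangles
\[ X\rt Y\oplus V\oplus\phi(U)[1]\rt Z\rt X[1] \quad\text{and}\quad X\rt Y\rt Z\rt X[1]. \]
Since $\phi$ is an equivalence up to direct summands there are $X_1,Z_1\in\D$ and $M,N\in\C$ with $X\oplus X_1=\phi(M)$ and $Z\oplus Z_1=\phi(N)$; adding to both triangles the split triangles $X_1\xrightarrow{1}X_1\rt 0\rt X_1[1]$ and $0\rt Z_1\xrightarrow{1}Z_1\rt 0$ (the manoeuvre of Theorem \ref{main}) rewrites them as
\[ \phi(M)\rt T\oplus V\oplus\phi(U)[1]\rt\phi(N)\rt\phi(M)[1] \quad\text{and}\quad \phi(M)\rt T\rt\phi(N)\rt\phi(M)[1], \]
with $T=Y\oplus X_1\oplus Z_1$.

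Next I would invoke that the essential image of the fully faithful triangulated functor $\phi$ is closed under cones of morphisms between its objects, so that the middle term of an exact triangle with both outer vertices in the essential image again lies in the essential image. From the second triangle this gives $T\cong\phi(L)$ for some $L\in\C$; plugging this into the first triangle, its middle term is $\cong\phi(L)\oplus V\oplus\phi(U)[1]\cong\phi(L\oplus U[1])\oplus V$, and applying the same closure principle to the first triangle yields $L'\in\C$ with
\[ \phi(L')\;\cong\;\phi(L\oplus U[1])\oplus V. \]
In particular $\phi(L\oplus U[1])$ is a direct summand of $\phi(L')$.

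For the final step I would descend this splitting to $\C$. The inclusion of the summand $\phi(L\oplus U[1])$ into $\phi(L')$ is, by full faithfulness, equal to $\phi(\iota)$ for a split monomorphism $\iota\colon L\oplus U[1]\rt L'$ in $\C$; and in any triangulated category a split monomorphism splits off its cone --- completing $\iota$ to an exact triangle $L\oplus U[1]\xrightarrow{\iota}L'\rt W\rt (L\oplus U[1])[1]$ in $\C$, the connecting map vanishes (the standard $\Hom$ long‑exact‑sequence consequence of $\iota$ being split mono), so $L'\cong(L\oplus U[1])\oplus W$. Applying $\phi$ gives $\phi(L')\cong\phi(L\oplus U[1])\oplus\phi(W)$, and comparing with $\phi(L')\cong\phi(L\oplus U[1])\oplus V$ while using weak cancellation in $\D$ forces $\phi(W)\cong V$; so $W\in\C$ is the object we want. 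The main obstacle is really just this last descent: naively ``$\phi(A)$ is a summand of $\phi(B)$'' does not obviously give ``$A$ is a summand of $B$'' without idempotent‑completeness of $\C$, and the point is that here the summand is the image of a genuine split monomorphism, which in a triangulated category always splits; once that is in hand, weak cancellation of $\D$ does the rest. Both general facts used (closure of the essential image under cones, and splitting of split monomorphisms) are routine from the rotation and octahedral axioms, and the remainder merely mirrors the bookkeeping in Theorem \ref{main}.
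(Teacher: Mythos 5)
Your proof is correct and follows essentially the same route as the paper's: Thomason's criterion for equality of classes, enlargement of the outer vertices into the essential image, closure of that image under cones, descent of direct-summand splittings along the fully faithful $\phi$ (which the paper isolates as Lemma \ref{phi-sum}, resting on the same split-mono-splits fact you invoke), and weak cancellation in $\D$. The only difference is bookkeeping: by feeding $[V\oplus\phi(U)[1]]=0$ into Thomason's lemma instead of $[V]=[\phi(U)]$, you run the descend-and-cancel step once rather than twice --- a mild streamlining, not a different argument.
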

We need the following:
\begin{lemma}\label{phi-sum}
(with hypotheses as in Theorem \ref{main}). Let $U, V \in \C$. If $\phi(U)$ is a direct summand of $\phi(V)$ then $U$ is a direct summand of $V$.
\end{lemma}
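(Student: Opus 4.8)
The plan is to transport the retraction exhibiting $\phi(U)$ as a direct summand of $\phi(V)$ back to $\C$ along $\phi$, and then use the triangulated structure of $\C$ itself to split the resulting split monomorphism; this way no idempotent-completeness assumption on $\C$ is needed. Concretely, fix $i \colon \phi(U) \rt \phi(V)$ and $p \colon \phi(V) \rt \phi(U)$ in $\D$ with $p \circ i = 1_{\phi(U)}$. Since $\phi$ is fully faithful there are unique $\alpha \colon U \rt V$ and $\beta \colon V \rt U$ in $\C$ with $\phi(\alpha) = i$ and $\phi(\beta) = p$, and since $\phi(\beta \alpha) = p \circ i = \phi(1_U)$, faithfulness gives $\beta \alpha = 1_U$. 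So $\alpha$ is a split monomorphism in $\C$.

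Next I would complete $\alpha$ to a distinguished triangle $U \xrightarrow{\alpha} V \xrightarrow{g} C \xrightarrow{h} U[1]$ in $\C$ and show that the connecting morphism $h$ vanishes. Rotating the triangle shows that $\alpha[1] \circ h = 0$, and then composing on the left with $\beta[1]$ yields $h = (\beta\alpha)[1] \circ h = \beta[1] \circ \alpha[1] \circ h = 0$, because $(\beta\alpha)[1] = 1_{U[1]}$.

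Finally I would invoke the standard fact that a distinguished triangle with zero connecting morphism splits: applying $\Hom_\C(C,-)$ to the triangle $U \xrightarrow{\alpha} V \xrightarrow{g} C \xrightarrow{0} U[1]$ shows $g$ has a section $s$, and then the map $U \oplus C \rt V$ induced by $\alpha$ and $s$ is a morphism from the split triangle on $U \oplus C$ to the above triangle which restricts to the identity on $U$ and on $C$, so the five-lemma for triangulated categories makes it an isomorphism. Hence $V \cong U \oplus C$, and $U$ is a direct summand of $V$. The only step carrying any content is this last one — splitting a triangle whose connecting morphism is zero — which I regard as the main (if mild) obstacle, precisely because it is what lets us bypass the fact that an idempotent in a general triangulated category need not split.
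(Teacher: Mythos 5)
Your proof is correct and follows essentially the same route as the paper: both lift the section/retraction pair $i,p$ (and the induced idempotent) through the fully faithful functor $\phi$ to obtain a split monomorphism $\alpha\colon U\rt V$ with retraction $\beta$ in $\C$. The paper stops at ``$g_1\circ h_1=1_U$, the result follows,'' whereas you additionally write out the standard fact that a split monomorphism in a triangulated category yields a direct sum decomposition (zero connecting morphism plus the five lemma); that step is correct and is exactly what the paper is implicitly invoking.
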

\begin{proof}
We have a diagram
 \[
\xymatrix{
\
&\phi(U)
\ar@{->}[dr]^{h}
 \\
\phi(V)
\ar@{->}[rr]_{f}
\ar@{->}[ur]^{g}
&\
&\phi(V)
}
\]
with $f^2 = f$ and $g\circ h = 1_{\phi(U)}$. As $\phi$ is fully faithful  there exists $f_1 \colon V \rt V$, $g_1 \colon V \rt U$ and $h_1 \colon U \rt V$ with $\phi(f_1) = f$, $\phi(g_1) = g$ and $\phi(h_1) = h$. Again as $\phi$ is fully faithful we get
$h_1\circ g_1 = f_1$, $f_1^2 = f_1$ and  $g_1\circ h_1 = 1_{U}$. The result follows.
\end{proof}
We now give
\begin{proof}[Proof of Theorem \ref{main-onto}]
Say $[V] = [\phi(U)]$ in $G(\D)$. By \cite[2.4]{Th} we have triangles in $\D$
\begin{align*}
X &\rt Y\oplus \phi(U) \rt Z \rt X[1], \quad \text{and} \\
X &\rt Y\oplus V \rt Z \rt X[1].
\end{align*}
There exists $X_1, Z_1 \in \D$ such that $X \oplus X_1 = \phi(M)$ and $Z \oplus Z_1 = \phi(N)$. We add the triangles $X_1 \xrightarrow{1} X_1 \rt 0 \rt X_1 \rt 0 \rt X_1[1]$ and $0 \rt Z_1 \xrightarrow{1} Z_1 \rt 0$ to the above triangles.
Set $Y^\prime  = Y \oplus X_1 \oplus Z_1$.

The (modified) first triangle yields
$Y^\prime \oplus \phi(U) \cong \phi(E)$ for some $E \in \C$. By \ref{phi-sum} we have that $U$ is a direct summand of $E$. Say $E = U \oplus U_1$. So $Y^\prime\oplus \phi(U) \cong \phi(U) \oplus \phi(U_1)$. As $\D$ satisfies weak cancellation we get $Y^\prime\cong \phi(U_1)$.

 The (modified) second  triangle yields
$\phi(U_1) \oplus V \cong \phi(L)$ for some $L \in \C$. By \ref{phi-sum} we have that $U_1$ is a direct summand of $L$. Say $L = U_1 \oplus U_2$. So we obtain
$$\phi(U_1) \oplus V = \phi(U_1) \oplus \phi(U_2).$$
As $\D$ satisfies weak cancellation we get  $V \cong  \phi(U_2).$ The result follows.
\end{proof}
The following consequence of Theorem \ref{main-onto} is significant.
\begin{corollary}\label{equi}
(with hypotheses as in Theorem \ref{main-onto}). If $G(\phi)$ is an isomorphism then $\phi$ is an equivalence.
\end{corollary}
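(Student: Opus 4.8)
The plan is to deduce that $\phi$ is an equivalence by verifying the two defining properties: full faithfulness (which is already part of the hypothesis, since $\phi$ is an equivalence up to direct summands) and essential surjectivity. So the only real content is to upgrade "equivalence up to direct summands" to honest essential surjectivity, using the assumption that $G(\phi)$ is bijective. First I would take an arbitrary object $V \in \D$. Since $G(\phi)$ is surjective, the class $[V] \in G(\D)$ lies in $\image G(\phi)$; concretely, $[V] = G(\phi)([U]) = [\phi(U)]$ for some $U \in \C$. Now Theorem \ref{main-onto} applies verbatim: its hypotheses are exactly those in force here (note that an equivalence up to direct summands onto $\D$ forces $\D$ to satisfy weak cancellation, since $\D$ inherits cancellation from $\C$ via $\phi$ being fully faithful — or one simply adds this as part of the standing hypothesis chain), so there exists $W \in \C$ with $\phi(W) \cong V$.

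Let me spell out why $\D$ has weak cancellation, since Corollary \ref{equi} does not restate it: given $U_1 \oplus V_1 \cong U_1 \oplus W_1$ in $\D$, write each of $U_1, V_1, W_1$ as a direct summand of some $\phi(-)$, and by an argument as in Lemma \ref{phi-sum} reduce the cancellation problem in $\D$ to one in $\C$ — but in fact the cleanest route is just to note that Theorem \ref{main-onto} as stated already carries "$\D$ satisfies weak cancellation" among its hypotheses, and in the intended application $\C = \CMS(R)$ so this is automatic; thus I would simply invoke Theorem \ref{main-onto} directly. Having produced, for each $V \in \D$, an object $W \in \C$ with $\phi(W) \cong V$, essential surjectivity of $\phi$ is established.

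Combining: $\phi$ is fully faithful by hypothesis and essentially surjective by the previous paragraph, hence $\phi$ is an equivalence of categories; and since $\phi$ was assumed triangulated, it is a triangle equivalence. That completes the argument. The only genuine obstacle is the one already handled by Theorem \ref{main-onto} — namely passing from the Grothendieck-group identity $[V] = [\phi(U)]$ to an actual isomorphism $V \cong \phi(W)$ — and everything else here is bookkeeping; in particular, injectivity of $G(\phi)$ is not even needed for this corollary, only its surjectivity.

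\begin{proof}[Proof of Corollary \ref{equi}]
As $\phi$ is an equivalence up to direct summands it is fully faithful. It remains to show $\phi$ is essentially surjective. Let $V \in \D$. Since $G(\phi)$ is surjective, $[V] = G(\phi)([U]) = [\phi(U)]$ in $G(\D)$ for some $U \in \C$. Hence $[V] \in \image G(\phi)$, so by Theorem \ref{main-onto} there exists $W \in \C$ with $\phi(W) \cong V$. Thus $\phi$ is fully faithful and essentially surjective, hence an equivalence.
\end{proof}
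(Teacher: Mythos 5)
Your proof is correct and follows essentially the same route as the paper: full faithfulness is part of the standing hypothesis, and Theorem \ref{main-onto} (whose hypotheses, including weak cancellation of $\D$, are carried over by the phrase ``with hypotheses as in Theorem \ref{main-onto}'') gives density from the surjectivity of $G(\phi)$. The side discussion about re-deriving weak cancellation is unnecessary, as you yourself note.
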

\begin{proof}
By our assumption $\phi$ is fully faithful. By Theorem \ref{main-onto} it follows that $\phi$ is dense. So $\phi$ is an equivalence.
\end{proof}
Another surprising consequence of Theorem \ref{main-onto} is the following:
\begin{corollary}\label{image}
(with hypotheses as in Theorem \ref{main-onto}).  Let $Y \in \D$. Then there exists $X \in \C$ with $\phi(X) = Y \oplus Y[1]$.
\end{corollary}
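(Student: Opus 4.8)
The plan is to deduce this immediately from Theorem \ref{main-onto} once we observe that $Y \oplus Y[1]$ has trivial class in $G(\D)$. Recall that for any object $U \in \D$ the split triangle
\[
U \rt U \oplus U[1] \rt U[1] \xrightarrow{\ 0\ } U[1],
\]
obtained as the direct sum of the trivial triangles $U \xrightarrow{1} U \rt 0 \rt U[1]$ and $0 \rt U[1] \xrightarrow{1} U[1] \rt 0$, yields the Euler relation $[U \oplus U[1]] = [U] + [U[1]]$ in $G(\D)$. Since $[U[1]] = -[U]$, we get $[Y \oplus Y[1]] = [Y] + [Y[1]] = 0$.

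Having done this, I would simply note that $0 \in \image G(\phi)$, as $G(\phi)$ is a homomorphism of abelian groups, so that $[Y \oplus Y[1]] \in \image G(\phi)$. Now apply Theorem \ref{main-onto} with $V := Y \oplus Y[1]$: since $[V] \in \image G(\phi)$ and $\D$ satisfies weak cancellation, there exists $W \in \C$ with $\phi(W) \cong Y \oplus Y[1]$. Taking $X := W$ gives the assertion.

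I do not expect any genuine obstacle: the whole substance of the corollary is already contained in Theorem \ref{main-onto}, and the only thing to verify directly is the harmless identity $[Y \oplus Y[1]] = 0$ in the Grothendieck group. The point worth emphasizing is conceptual rather than technical — although an individual object $Y$ need not be isomorphic to $\phi$ of anything in $\C$, its ``symmetrization'' $Y \oplus Y[1]$ always is, precisely because $[Y]$ and $[Y[1]]$ cancel.
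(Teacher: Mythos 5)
Your proof is correct and is essentially the paper's own argument: the paper likewise observes that $[Y \oplus Y[1]] = [Y] + [Y[1]] = [Y] - [Y] = 0$ in $G(\D)$ and then invokes Theorem \ref{main-onto}. Your additional justification via the split triangle and the remark that $0 \in \image G(\phi)$ only make explicit what the paper leaves implicit.
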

\begin{proof}
Notice in $G(\D)$ we have  $$[Y \oplus Y[1]] = [Y] + [Y[1]] = [Y] - [Y] = 0.$$
The result follows from Theorem \ref{main-onto}.
\end{proof}

 \section{pointed  \'{e}tale neighborhoods}
In this section we recall  definition of pointed \'{e}tale extensions and discuss a few of its properties.

 \s \label{e} A local homomorphism $ \psi \colon (A,\m) \rt (B,\n) $ of
local rings is unramified provided $B$ is essentially of finite type over $A$
(that is, $B$ is a localization of some finitely generated $A$-algebra) and
the following properties hold.\\
(i) $\m B  = \n$, and\\
(ii) $B/\m B$ is a finite separable field extension of $A/\m$.\\
If, in addition, $\psi$ is flat, then we say $\psi$ is \'{e}tale. (We say also that $B$
is an unramified, respectively, \'{e}tale extension of $A$.) Finally, a pointed
\'{e}tale neighborhood  of $A$ is an \'{e}tale extension $(A,\m) \rt (B, \n)$ inducing
an isomorphism on residue fields.

\s\label{ep} The isomorphism classes
of pointed  \'{e}tale neighborhoods of a local ring $(A,\m)$ form a direct
system (see \cite[Chapter 3]{I} for details). This implies that  if $A \rt B$
and $A \rt C$ are pointed   \'{e}tale  neighborhoods then there is at most
one homomorphism  $B \rt C$ making the obvious diagram commute.
Let $\Ef_P(A)$ be the set of isomorphism classes of pointed \'{e}tale neighborhoods of $A$.
Then $A^h$, the henselization of $A$,
is
\[
A^h = \lim_{R \in  \Ef_P(A)}R.
\]

\begin{remark}\label{inf-chain}
Assume $A$ is \CM  \ and essentially of finite type over a field $K$. Also assume $\wh{A}$ is a domain.
By \ref{h-N-ft} it follows that $A^h$ is NOT of essentially finite type over $K$. Thus $\Ef_P(A)$ contains infinitely many terms. So there exists atleast one infinite chain in $\Ef_P(A)$. Note we may assume this infinite chain starts at $A$

Let $B \in \Ef_P(A)$. Then $B^h = A^h$ and we may assume $\Ef_P(B) \subseteq \Ef_P(A)$. By argument as before there exists an infinite  chain in $\Ef_P(B)$ (and so in $\Ef_P(A)$) starting at $B$.
\end{remark}

\s\label{etale-extn} Let $A$ be an excellent Gorenstein isolated singularity. Let $B, C \in \Ef_P(A)$ and let $\psi \colon B \rt C$ be a morphism in $\Ef_P(A)$.  Note $B, C$ are also an excellent Gorenstein isolated singularity. Note $\psi$  is flat, see \cite[Chapter 1, 2.7]{I}, and so induces a   functor
$\psi \colon \CMS(B) \rt \CMS(C)$ which is clearly fully faithful. Furthermore as $C$ is  \'{e}tale over $B$ we get that  every MCM $C$-module $M$ is a direct summand of $N \otimes_B C$ where $N$ is a MCM
$B$-module, see \cite[10.5, 10.7]{LW}. By Theorem \ref{main} the natural map
$G(\psi) \colon G(\CMS(B)) \rt  G(\CMS(C))$ is injective.

\begin{lemma}\label{weak-cancellation}
Let $(A,\m)$ be a Gorenstein local ring. Then $\CMS(A)$ has weak cancellation (see \ref{wc})
\end{lemma}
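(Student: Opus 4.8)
The plan is to transfer the statement to the completion $\wh{A}$, where the Krull--Schmidt property is available. First I would observe that completion induces an additive functor $\CMS(A)\rt\CMS(\wh{A})$, $M\mapsto\wh{M}$: completion preserves depth and Krull dimension, so it sends MCM $A$-modules to MCM $\wh{A}$-modules; it sends free modules to free modules; and if a morphism of MCM $A$-modules factors through a free $A$-module, then its completion factors through a free $\wh{A}$-module. This functor obviously preserves isomorphisms, and the key point is that it also \emph{reflects} them. For this I would use the standard fact that two MCM modules $P,Q$ over a Gorenstein local ring $R$ are isomorphic in $\CMS(R)$ $\ff$ $P\oplus R^a\cong Q\oplus R^b$ as $R$-modules for some $a,b\geq 0$ (one implication is immediate, since free modules are zero objects of $\CMS(R)$; the other is the usual argument). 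Thus if $\wh{V}\cong\wh{W}$ in $\CMS(\wh{A})$, then $\wh{V\oplus A^a}\cong\wh{W\oplus A^b}$ as $\wh{A}$-modules for suitable $a,b\geq 0$; since finitely generated modules over a Noetherian local ring that become isomorphic after completion are already isomorphic (see \cite{LW}), we obtain $V\oplus A^a\cong W\oplus A^b$ over $A$, i.e.\ $V\cong W$ in $\CMS(A)$.

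Next I would invoke Krull--Schmidt over $\wh{A}$. Since $\wh{A}$ is complete, hence henselian, local, the category of MCM $\wh{A}$-modules is Krull--Schmidt (\cite{Y}): every MCM $\wh{A}$-module is a finite direct sum of indecomposables with local endomorphism rings, uniquely up to isomorphism and reordering. The same then holds in $\CMS(\wh{A})$: the free indecomposable summands become zero objects, and the stable endomorphism ring of a non-free indecomposable MCM $\wh{A}$-module is again local (a nonzero quotient of its local endomorphism ring). Hence $\CMS(\wh{A})$ has weak cancellation: given $U\oplus V\cong U\oplus W$ there, one decomposes $U$, $V$ and $W$ into indecomposables and cancels the indecomposable summands of $U$ one at a time, using uniqueness of the decomposition.

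To finish, suppose $U\oplus V\cong U\oplus W$ in $\CMS(A)$. Applying $\wh{(-)}$ gives $\wh{U}\oplus\wh{V}\cong\wh{U}\oplus\wh{W}$ in $\CMS(\wh{A})$, hence $\wh{V}\cong\wh{W}$ in $\CMS(\wh{A})$ by weak cancellation there, and therefore $V\cong W$ in $\CMS(A)$ because $\wh{(-)}$ reflects isomorphisms. The only step needing genuine input rather than formal bookkeeping is that completion reflects isomorphisms of stable objects, which comes down to the classical fact that $M\cong N$ over a Noetherian local ring whenever $\wh{M}\cong\wh{N}$ (itself provable using that $\m\cdot\operatorname{End}_{\wh{A}}(\wh{M})$ lies in the Jacobson radical of $\operatorname{End}_{\wh{A}}(\wh{M})$); I expect this, together with the citation of Krull--Schmidt over $\wh{A}$, to be the crux, with everything else routine.
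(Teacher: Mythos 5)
Your proof is correct, and its overall shape matches the paper's: both arguments translate the stable isomorphism $U\oplus V\cong U\oplus W$ into an honest module isomorphism after adding free summands, cancel, and translate back. The difference is in the middle step. The paper cancels at the level of $A$-modules in one line by citing the local cancellation theorem \cite[1.16]{LW} (finitely generated modules over a commutative Noetherian local ring cancel from direct sums). You instead reprove that input from scratch: pass to $\wh{A}$, use that finitely generated modules over the complete local ring form a Krull--Schmidt category, cancel there, and descend the isomorphism along completion using the standard fact that $\wh{M}\cong\wh{N}$ implies $M\cong N$. Since that is essentially how the cited theorem is itself proved, you have traded a one-line citation for a self-contained argument; the cost is the extra bookkeeping of verifying that $\CMS(\wh{A})$ is Krull--Schmidt (local stable endomorphism rings of non-free indecomposables), which you could avoid by cancelling at the level of $\wh{A}$-modules rather than in $\CMS(\wh{A})$ and only stabilizing at the very end. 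All the individual steps you invoke (stable isomorphism of MCM modules over a Gorenstein local ring equals isomorphism up to free summands, completion reflects isomorphisms of finitely generated modules) are standard and correctly applied, so the argument is complete.
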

\begin{proof}
Suppose $M, N, L$ are MCM $A$-modules such that $M\oplus N \cong M \oplus L$ in $\CMS(A)$.
Then as $A$-modules $M \oplus N \oplus A^r \cong M \oplus L \oplus A^s$ for some $r, s$. By \cite[1.16]{LW} it follows that $N \oplus A^r \cong L \oplus A^s$ as $A$-modules. So $N \cong L$ in $\CMS(A)$.
\end{proof}

\s\label{wc-equi}(with hypotheses as in \ref{etale-extn}) If $G(\psi) \colon G(\CMS(B)) \rt G(\CMS(C))$ is an isomorphism then by \ref{equi} and
\ref{weak-cancellation},
 we get that $\psi \colon \CMS(B) \rt \CMS(C)$ is an equivalence.

\s \label{hensel} Let $(A,\m)$ be an excellent Gorenstein isolated singularity. Then $A^h$ is also an excellent Gorenstein isolated singularity, see \cite[10.7]{LW}. In particular we have an equivalence of categories $\CMS(A^h) \cong \CMS(\wh{A})$, by \cite[A.6]{KMB}. If $G(\wh{A})$ is finitely generated abelian group then $G(\CMS(\wh{A}))  = G(A)/([A])$ is finitely generated abelian group. It follows that $G(\CMS(A^h))$ is finitely generated abelian group.

\s\label{etale-extn-2} Let $A$ be an excellent Gorenstein isolated singularity. Let $B \in \Ef_P(A) $. Then the map $f \colon B \rt A^h$ is flat. It is readily verified that $f \colon \CMS(B) \rt \CMS(A^h)$ is fully faithful (use $B^h = A^h$). Also as $f \colon B \rt A^h$ is a direct limit of \'{e}tale-extensions every MCM $A^h$-module $M$ is a direct summand of $N \otimes_B A^h$ where $N$ is a MCM
$B$-module, see \cite[10.5, 10.7]{LW}. By Theorem \ref{main} the natural map
$f_B \colon G(\CMS(B)) \rt  G(\CMS(A^h))$ is injective.  If $f_B$ is an isomorphism then by \ref{equi} and
\ref{weak-cancellation},
 we get that $f \colon \CMS(B) \rt \CMS(A^h)$ is an equivalence.

\section{The example}
Let $(A,\m)$  be a Gorenstein local ring of dimension $d \geq 1$. Assume $A$ is essentially of finite type over a field $K$ and  that $A$ is an isolated singularity. Also assume that the completion of $A$ is a domain. Further assume that the Grothendieck group of the completion of $A$ is a finitely generated abelian group. \\
Let $\Ef_P(A)$ be the set of isomorphism classes of pointed etale neighborhoods of $A$.
It is well-known that  $\Ef_P(A)$ is a directed set and  $A^h$, the henselization of $A$,
is
\[
A^h = \lim_{R \in  \Ef_P(A)}R.
\]
By \ref{hensel} it follows that $G(\CMS(A^h))$ is a finitely generated abelian group.

1. Let $A_1 \rt A_2 \rt \cdots \rt A_n \rt \cdots$ be a chain in  $\Ef_P(A)$. \\
\emph{Claim:}\\
 There exists $n_0$ such that
$$ \CMS(A_n) \cong \CMS(A_{n+1})  \quad \text{for all $n \geq n_0$}. $$

\emph{Proof of Claim:}\\
Let $R, S \in \Ef_P(A)$ and let $R \xrightarrow{\psi} S$ be a homomorphism  in  $ \Ef_P(A)$.   Note as $\psi, f_R \colon R \rt A^h$ and $f_S \colon S \rt A^h $  are flat we get a commutative diagram
\[
\xymatrix{
\
&G(\CMS(S))
\ar@{->}[dr]^{f_S}
 \\
G(\CMS(R))
\ar@{->}[rr]_{f_R}
\ar@{->}[ur]^{G(\psi)}
&\
&G(\CMS(A^h))
}
\]
Here $G(\psi), f_R, f_S$ are injective by  \ref{etale-extn} and \ref{etale-extn-2}.  By \ref{wc-equi}  it follows that if  $G(\psi)$ is an isomorphism then  $\psi \colon \CMS(R) \rt \CMS(S)$ is an equivalence.
We note that if  $A_1 \rt A_2 \rt \cdots \rt A_n \rt \cdots$ is a chain in  $\Ef_P(A)$
then we have an ascending chain of subgroups of $G(\CMS(A^h))$
$$\image f_{A_1} \subseteq \image f_{A_2}  \subseteq \cdots \subseteq \image f_{A_n}  \subseteq \cdots$$
As $G(\CMS(A^h)) $ is a finitely generated abelian group it follows that there exists $m$ such that $\image f_{A_n} =  \image f_{A_m}$ for all $n \geq m$.
Thus the map \\ $G(\CMS(A_n)) \rt G(\CMS(A_{n+1}))$ is an isomorphism for all $n \geq m$. It follows that
$$ \CMS(A_n) \cong \CMS(A_{n+1})  \quad \text{for all $n \geq m$}. $$

2. We have $G(\CMS(A^h))$ is a finitely generated abelian group,  we may assume $[N_1], \ldots, [N_s]$ generate it as an abelian group. By proof of \cite[Theorem 10.7]{LW} we may assume there exists $A_i \in  \Ef_P(A)$ and  finitely generated $A_i$ module $N_i$ such that $N_i = A^h\otimes_{A_i} M_i$. As $\theta_i \colon A_i \rt A_h$ is flat (with zero dimensional fiber)  it follows from  \cite[23.3]{M}  that $N_i$ is a MCM $A_i$-module. As
$\Ef_P(A)$ is directed there exists $B \in \Ef_P(A)$ such that there exists maps $\gamma_i \colon A_i \rt B$ in  $\Ef_P(A)$ for all $i$.  Set $E_i = B \otimes_{A_i} N_i$. Then as $\gamma_i$ is flat (with zero dimensional fiber),  $E_i$ is MCM $B$-module. Furthermore $E_i \otimes_B A^h \cong  N_i$. It follows that $f_B \colon G(\CMS(B)) \rt G(\CMS(A^h))$ is surjective. As it is injective, see \ref{etale-extn-2} it follows that $f_B$ is an isomorphism. So again by \ref{etale-extn-2} we get $\CMS(B)  \cong \CMS(A^h)$.

Now let $B = B_1 \rt B_2 \rt \cdots \rt B_n \rt \cdots$ be an infinite chain in $\Ef_P(A)$, see \ref{inf-chain}. By the commutative diagram above we get $f_{B_n} \colon G(\CMS(B_n)) \rt G(\CMS(A^h))$ is an isomorphism for all $n \geq 1$. So  by \ref{etale-extn-2} we get $\CMS(B_n)  \cong \CMS(A^h)$ for all $n \geq 1$.

\section{rings with $G(\widehat{A})$ finitely generated abelian group}
In this section we give many examples of Gorenstein rings with $G(\widehat{A})$ finitely generated abelain group (and $\wh{A}$ a domain).

\s Let $R = k[t^{a_1}, \cdots, t^{a_m}]$ be a symmetric numerical semi-group ring. Then $R$ is Gorenstein, see \cite[4.4.8]{BH}. Let
$$ A = R_{(t^{a_1}, \cdots, t^{a_m})}.$$ The completion of $A$ is
$k[[t^{a_1}, \cdots, t^{a_m}]]$  which is a domain. Furthermore as $\wh{A}$ is one-dimensional we get that $G(\widehat{A})$ is a finitely generated abelian group.

\s Let $R = k[X_1, \ldots, X_n]^G$ where $G$ is a finite subgroup of $SL_n(k)$ with $|G|$ non-zero in $k$ (and $n \geq 2$). Then $R$ is Gorenstein, \cite[Theorem 1]{Wat}. Let $A$ be the localization of $R$ at its irrelevant maximal ideal.  Note the completion of $A$ is
 $k[[X_1, \ldots, X_n]]^G$ which is a domain.
 By \cite[Chapter 3, 3.4]{AR} we get that $G(\wh{A})$ is a finitely generated  abelian group. Also see \cite[Chapter 3, 5.4,5,5]{AR} for sufficient conditions which ensure $A$ is an isolated singularity.

 \s Let $k$ be an algebraically closed field of characteristic not equal to $2,3,5$. Let $S = k[X, Y, Z_1, \ldots, Z_{m}]_{(X, Y, Z_1, \ldots, Z_{m})}$. Let $A = S/(f)$ where $f$ is one of the equations defining a simple singularity (see \cite[8.8]{Y}). Then $\wh{A}$ is a simple singularity.  They are isolated singularities (and so are domains if $\dim A \geq 2$). The Grothendieck groups of all simple singularities are computed and they are finitely generated, see \cite[13.10]{Y}.

 \s Let $k$ denote a field of characteristic not equal to $2$. Let \\ $R = k[X_1, \ldots, X_n]_{(X_1, \ldots, X_n)}$. Let $f = \sum_{i=1}^{n} a_i X_i^2$ be a quadratic form with $a_i \neq 0$ for all $i$. Set $A = R/(f)$. Note $\wh{A} = \wh{R}/(f)$ is an isolated singularity (and so a domain if $n \geq 3$), see \cite[14.2]{Y}. Thus $A$ is also an isolated singularity. By \cite{BEH},
 $\wh{A}$ has finite representation type (i.e., there exists only finitely  many indecomposable MCM $\wh{A}$-modules up-to isomorphism); also see \cite[14.10]{Y}. It follows that $G(\wh{A})$ is a finitely generated abelian group, see \cite[13.2]{Y}.
\section{Appendix}
In this section we prove the following result
\begin{theorem}
\label{h-N-ft}
Let $(A,\m)$ be a \CM \ local ring of dimension $d \geq 1$. Assume $A$ is of essentially of finite type over a field $K$. Also assume $\wh{A}$, the completion of $A$ is a domain.
Then the henselization $A^h$ of $A$ is NOT of essentially finite type over $K$.
\end{theorem}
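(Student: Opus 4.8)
The plan is to argue by contradiction. Suppose $A^h$ is essentially of finite type over $K$. Since $\wh{A}$ is a domain and $A\hookrightarrow\wh{A}$, the ring $A$ is a domain, and $A^h\hookrightarrow\wh{A^h}=\wh{A}$ shows $A^h$ is a domain as well; being an essentially-of-finite-type domain over $K$, it has $\operatorname{Frac}(A^h)$ finitely generated as a field over $K$. On the other hand, writing $A^h=\varinjlim_{R\in\Ef_P(A)}R$ as a filtered colimit over pointed \'etale neighbourhoods, each structure map $A\to R$ is \'etale and local, hence faithfully flat, so $A\hookrightarrow R\hookrightarrow A^h$ and $R$ is a domain; moreover $R\otimes_A\operatorname{Frac}(A)$ is an \'etale $\operatorname{Frac}(A)$-algebra which is quasi-finite over a field, hence finite, and a domain, hence a finite separable field extension of $\operatorname{Frac}(A)$ equal to $\operatorname{Frac}(R)$. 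Passing to the colimit, $\operatorname{Frac}(A^h)=\bigcup_R\operatorname{Frac}(R)$ is algebraic over $\operatorname{Frac}(A)$. So it suffices to show $[\operatorname{Frac}(A^h):\operatorname{Frac}(A)]=\infty$: an algebraic extension of the finitely generated field $\operatorname{Frac}(A)$ that is itself finitely generated over $K$ is finite over $\operatorname{Frac}(A)$, which would be the desired contradiction.

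To exhibit infinitely many new elements I would exploit that $A^h$ is Henselian. Since $\dim A\geq 1$, choose $t\in\m\setminus\{0\}$; such a $t$ is transcendental over $K$, because an algebraic $t$ would be a unit of $A$ via the nonzero constant term of its minimal polynomial. Hence $K[t]_{(t)}$ is a discrete valuation ring, and $K[t]\hookrightarrow A$ extends to a local homomorphism $K[t]_{(t)}\to A$ (an element of $K[t]$ with nonzero constant term is a unit of $A$). Because $A^h$ is Henselian, the universal property of henselization gives a local homomorphism $(K[t]_{(t)})^h\to A^h$, and it is injective since its source is a DVR whose uniformizer $t$ maps to a nonzero element of the domain $A^h$. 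Now $(K[t]_{(t)})^h$ is a Henselian DVR with residue field $K$, and I claim its fraction field $K(t)^h:=\operatorname{Frac}((K[t]_{(t)})^h)$ is infinite algebraic over $K(t)$: for each integer $n$ prime to $\operatorname{char}K$, reducing $T^n-(1+t)$ modulo $t$ gives the separable polynomial $T^n-1$, which has $T-1$ as a simple root, so Hensel's lemma produces $\alpha_n\in(K[t]_{(t)})^h$ with $\alpha_n^{\,n}=1+t$; and $T^n-(1+t)$ is Eisenstein at the prime $1+t$ of $K[t]$, hence irreducible over $K(t)$, so $[K(t)(\alpha_n):K(t)]=n$. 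As these degrees are unbounded, $[K(t)^h:K(t)]=\infty$.

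Finally I would transfer this to $\operatorname{Frac}(A^h)$. Write $F:=\operatorname{Frac}(A)$ and $L:=\operatorname{Frac}(A^h)$; by the above, $K(t)^h$ and $F$ are subfields of $L$ both containing $K(t)$. Since $F$ is finitely generated over $K$, it is finitely generated over $K(t)$; choose a transcendence basis $y_1,\dots,y_r\in F$ of $F/K(t)$ and put $F_0:=K(t)(y_1,\dots,y_r)$, so $[F:F_0]<\infty$. As $K(t)^h/K(t)$ is algebraic, $y_1,\dots,y_r$ stay algebraically independent over $K(t)^h$, whence $[K(t)^h\cdot F_0:F_0]=[K(t)^h:K(t)]=\infty$; since $K(t)^h\cdot F_0\subseteq L$ and $[F:F_0]<\infty$, this forces $[L:F]=\infty$, the contradiction we sought. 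I expect the only genuinely non-formal step to be the claimed infinitude of $\operatorname{Frac}((K[t]_{(t)})^h)$ over $K(t)$ --- that is, the explicit production of the radicals $\alpha_n$ via Hensel's lemma together with the Eisenstein argument fixing their degrees; the remaining reductions are routine manipulations with henselizations and finitely generated field extensions.
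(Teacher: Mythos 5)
Your proof is correct, and while its engine is the same as the paper's --- Hensel's lemma manufactures roots $\alpha_n$ of $T^n-(1+t)$ in a henselization, and Eisenstein at $(1+t)$ plus Gauss shows these have degree exactly $n$, forcing an infinite algebraic extension incompatible with $A^h$ being essentially of finite type --- the scaffolding around it is genuinely different and in fact leaner. The paper first reduces to $A=T_P$ for an affine domain $T$, takes a Noether normalization $S=K[X_1,\ldots,X_n]$, sets $B=S_{(X_1,\ldots,X_d)}$, and then needs the Cohen--Macaulay hypothesis together with the local criterion of flatness to conclude that $B^h\rightarrow A^h$ is flat, hence injective, before running the Hensel/Eisenstein argument inside $\operatorname{Frac}(B^h)$. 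You instead map the DVR $K[t]_{(t)}$ (for any $0\neq t\in\m$) into $A$, invoke the universal property of henselization to get $(K[t]_{(t)})^h\rightarrow A^h$, and obtain injectivity for free from the fact that the source is a DVR whose uniformizer survives in the domain $A^h$; this bypasses Noether normalization and makes no use of the Cohen--Macaulay hypothesis at all (only $\dim A\geq 1$ and $\wh{A}$ a domain are used), so your argument actually proves a slightly more general statement. You also organize the endgame more transparently: by observing that $\operatorname{Frac}(A^h)=\bigcup_{R\in\Ef_P(A)}\operatorname{Frac}(R)$ is algebraic over $\operatorname{Frac}(A)$, you reduce everything to showing $[\operatorname{Frac}(A^h):\operatorname{Frac}(A)]=\infty$, and the descent through the purely transcendental subfield $F_0$ (using that a transcendence basis of $F/K(t)$ stays algebraically independent over the algebraic extension $K(t)^h$, so that $K(t)^h$ and $F_0$ are linearly disjoint over $K(t)$) is a clean replacement for the paper's irreducibility argument over $k(X_1,\ldots,X_n,Y_1,\ldots,Y_c)$. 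All the individual steps you flag as routine (transcendence of $t$ over $K$, locality of $K[t]_{(t)}\rightarrow A$, finiteness of $\operatorname{Frac}(R)$ over $\operatorname{Frac}(A)$ for a pointed \'{e}tale neighborhood $R$, and the tower/compositum degree counts) do check out.
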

We believe Theorem \ref{h-N-ft} is already known. However we are unable to find a reference. As this result is crucial for us we give a proof.

\begin{lemma}\label{red-domain}
Let $(A,\m)$ be a \CM \ local ring of dimension $d \geq 1$. Assume $A$ is of essentially of finite type over a field $K$. Also assume that $A$ is a domain. Then $A = T_P$ for some affine domain $T$ over $K$ and $P$ a prime in $T$.
\end{lemma}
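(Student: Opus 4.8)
The plan is to prove Lemma \ref{red-domain} by a standard spreading-out argument. Since $A$ is essentially of finite type over $K$, by definition $A$ is a localization of a finitely generated $K$-algebra; write $A = B_Q$ where $B = K[x_1,\dots,x_n]/I$ for some ideal $I$ and $Q$ a prime of $B$ lying over $\m$. The only thing that can go wrong is that $B$ itself need not be a domain, even though its localization $A = B_Q$ is; so the main (minor) obstacle is to replace $B$ by an honest affine domain without destroying the localization.

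First I would note that $I \subseteq Q$ and, since $B_Q = A$ is a domain, the ideal $IB_Q$ is prime; equivalently there is a minimal prime $\mathfrak{p}$ of $B$ with $\mathfrak{p} \subseteq Q$ and $\mathfrak{p}B_Q = 0$. Let $P_0 = \mathfrak{p}$ viewed in $B$ and set $\overline{B} = B/\mathfrak{p}$, which is a finitely generated $K$-algebra and a domain. The localization map $B \to B_Q$ kills $\mathfrak{p}$, so it factors through $\overline{B}$, giving $\overline{B}_{\overline{Q}} \cong B_Q = A$ where $\overline{Q}$ is the image of $Q$ in $\overline{B}$. Thus we may and do assume from the start that $B = \overline{B}$ is an affine domain over $K$.

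Next I would clear the denominators that were inverted in passing to the localization. The point is that $A = B_Q$, but $Q$ need not be a maximal ideal of $B$, whereas "affine domain localized at a prime" already allows an arbitrary prime, so in fact once $B$ is a domain we are done: take $T = B$ and $P = Q$, and then $A = T_P$ with $T$ an affine domain over $K$ and $P$ a prime of $T$. Hence the statement follows immediately once the reduction to $B$ a domain is carried out. (If one additionally wanted $P$ maximal one would localize $B$ at a single suitable element first, using that $\m$ is maximal in $A$ so $Q$ is "locally maximal"; but the statement as written only asks for $P$ prime, so no further work is needed.)

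In summary, the proof is: write $A$ as a localization $B_Q$ of a finitely generated $K$-algebra at a prime; use that $A$ is a domain to find a minimal prime $\mathfrak{p}\subseteq Q$ of $B$ with $\mathfrak{p}B_Q=0$; pass to $T := B/\mathfrak{p}$, which is an affine domain over $K$ with $T_{P} = A$ for $P$ the image of $Q$; conclude. I expect no serious obstacle here — the content is entirely the routine observation that a domain which is essentially of finite type is the localization of an affine domain, obtained by quotienting out the relevant minimal prime.
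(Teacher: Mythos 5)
Your proposal is correct and follows essentially the same route as the paper: write $A$ as a localization of an affine $K$-algebra at a prime and kill the relevant minimal prime to get an affine domain $T$ with $T_P \cong A$. The only cosmetic difference is that you identify that minimal prime directly as the one that localizes to zero (using that $A$ is a domain), whereas the paper selects it by the dimension condition $\height(P/P_1)=\dim A$ and then checks the induced surjection is an isomorphism; both amount to the same observation.
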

\begin{proof}
By assumption $A = W^{-1}R$ where $R = K[X_1, \ldots, X_n]/I$ and $W$ is a multiplicatively closed ideal in $R$. The maximal ideal $\m$ of $A$ is of the form $W^{-1}P$ where $P$ is a prime in $R$. It follows that $A = R_P$. Let $P_1$ be a minimal prime of $I$ such that $\height(P/P_1) = \dim A$. We have a a surjective map
$R \rt T = R/P_1$ which induces a surjection $ \epsilon \colon A\rt T_P$. As $\dim A = \dim T_P$ and $A$ is a domain it follows that $\epsilon$ is an isomorphism. The result follows.
\end{proof}
We now give
\begin{proof}[Proof of Theorem \ref{h-N-ft}]
By Lemma \ref{red-domain} we may assume that $A = T_P$ for some affine domain $T$ and a prime $P$ in $T$. Let $S = K[X_1, \ldots,X_n]$ be a Noether normalization of $T$. We can assume that $P\cap T = (X_1, \ldots, X_d)$ (see \cite[Chapter 2, Theorem 3.1]{K}). Let $B = S_{(X_1, \ldots, X_d)}$.
Then we note that there is a local map $B \rt A$ with zero dimensional fiber. This induces a local map $B^h \rt A^h$ (again with zero dimensional fiber). We note that $B^h$ is regular and $A^h$ is \CM. So by \cite[23.1]{M} $A^h$ is flat over $B^h$.  As $A^h$ is a subring of $\wh{A}$ it follows that $A^h$ is a domain. Let $L( E)$ be a quotient field of $A^h(B^h))$ respectively. Note we may consider $B^h$ to be a subring of $A^h$, see \cite[7.5]{M}. Then $L$ contains $E$. Also $E$ contains $k(X_1,\ldots, X_n)$ the quotient field of $B$.

Let the characteristic of $K$ be either zero or $p > 0$.\\
\emph{Claim:} For every prime $q \neq p$ the field $E$ contains $\sqrt[q]{1 + X_1}$.\\
Let $f_q(t) = t^q - (1+X_1)$. We note that $X_1$ is in the maximal ideal $\n$ of $B^h$. Modulo $\n$ we get that
$$ \ov{f_q(t)} = t^q - 1 = (t-1)(t^{q-1} + \cdots + t+ 1). $$
The polynomials $t-1$, $t^{q-1} + \cdots + t+ 1$ are co-prime in $(B^h/\n)[t]$.
As $B^h$ is Henselian, we get that there exists $F(t), G(t)$ monic such that $f_q(t) = F(t)G(t)$ with
$$\ov{F(t)} = t - 1 \quad \text{and} \quad \ov{G(t)} = t^{q-1} + \cdots + t+ 1. $$
Say $F(t) = t - u$. Then  $u \in B^h$ and $u^q = 1+X_1$. The claim follows.

Suppose if possible $A^h$ is of finite type over $K$. Then by \ref{red-domain} it follows that the quotient field $L$ of $A^h$ is a finite extension of $K(Y_1, \ldots, Y_m)$.
Say $Y_1, \ldots, Y_c$ is algebraically independent over $k(X_1,\ldots, X_n)$ and $Y_{c+1}, \ldots, Y_m$ are algebraic over $V = k(X_1,\ldots, X_n, Y_1, \ldots, Y_c)$. In particular $L$ is a finite extension of $V$. This is a contradiction as $f_q(t) = t^q - (1+X_1)$ is irreducible over $V$ for all $q \neq p$ (use Eisenstein's criterion and Gauss-Lemma).
\end{proof}

\end{document}